\documentclass[11pt]{amsart}
\usepackage[T1]{fontenc}
\usepackage{lmodern,amssymb,mathtools,microtype,etoolbox}
\usepackage[colorlinks=true,linkcolor=blue,citecolor=blue,urlcolor=blue]{hyperref}
\makeatletter
\patchcmd{\section}{\scshape}{\bfseries}{}{}
\makeatother
\allowdisplaybreaks
\newtheorem{theorem}{Theorem}[section]
\newtheorem{proposition}[theorem]{Proposition}
\newtheorem{lemma}[theorem]{Lemma}
\newtheorem{corollary}[theorem]{Corollary}
\theoremstyle{remark}
\newtheorem{remark}[theorem]{Remark}
\newcommand{\CC}{\mathbb C}
\newcommand{\RR}{\mathbb R}
\newcommand{\ZZ}{\mathbb Z}

\title[Norm extraction and cubic descent]{Primitive norm extraction and a signed descent for the cubic Fermat equation}
\author{R. Laniewski}
\date{September 23, 2026}
\subjclass[2020]{Primary 11D41; Secondary 11D25, 11E25}
\keywords{Cubic Fermat equation, infinite descent, quadratic norm, primitive power extraction, Thue's lemma}

\begin{document}

\begin{abstract}
We give a self-contained proof of the cubic case of Fermat's theorem through a signed form of Euler's descent, with the required extraction step proved directly in $\mathbb{Z}[\sqrt{-3}]$. A pigeonhole argument of Thue represents the relevant primes by $X^2+3Y^2$, and explicit division in $\mathbb{Z}[\sqrt{-3}]$ lifts primitive odd-power norms to powers in that order. For cubes, the resulting coordinate identities transform $2K(K^2+3M^2)=C^3$ into another equation of the same signed form with a strictly smaller non-zero integer height. The construction gives $|C'|\le |C|/39$. Half-sums and half-differences of the two odd entries place the sum and difference parity configurations in this single descent.
\end{abstract}

\maketitle

\section{Introduction}\label{sec:introduction}
The equation $A^3+B^3=C^3$ has no solution in positive integers. Euler's treatment in the second volume of the 1770 \emph{Vollst\"andige Anleitung zur Algebra} brings the quadratic form $X^2+3Y^2$ into the descent \cite[Part~II, Chapter~15, \S243]{euler}. The step requiring further justification is the extraction of a cube in $\ZZ[\sqrt{-3}]$ from a primitive cube norm. Edwards discusses this gap and its completion in Chapter~2 of~\cite{edwards}. Dickson's second volume places the cubic case within the historical development of Fermat's equation \cite{dickson}. Barbara gives a short signed descent that treats both parity configurations together \cite{barbara}. The extraction step of that descent rests on unique factorization in the Eisenstein integers, together with the classification of their prime elements and the representation of primes $p\equiv1\pmod6$ by $X^2+3Y^2$. Section~\ref{sec:fermat} adapts this descent, while Section~\ref{sec:arithmetic-lifting} replaces its extraction step by an argument within $\ZZ[\sqrt{-3}]$.

The arithmetic takes place in the quadratic order $\mathcal L=\ZZ[\sqrt{-3}]$. A perfect-power norm does not by itself ensure a power in this order, and Remark~\ref{rem:units} exhibits the failure of unique factorization.

Theorem~\ref{thm:odd-norm-lifting} supplies the extraction statement for every odd exponent $k\geq3$. Its proof remains in $\mathcal L$ and requires neither passage to the maximal order nor the composition or genus theory of binary quadratic forms. A pigeonhole argument of Thue represents the relevant primes by $X^2+3Y^2$, and rational-prime divisibility and norm multiplicativity then permit explicit division by elements of prime norm. Each induction step extracts exactly $k$ copies of one prime-norm factor, including when the rational prime occurs repeatedly in the original norm. This gives an elementary route through the step left unjustified in the classical treatment.

For two odd integers $a,b$, both $a+b$ and $b-a$ are even. Thus $K=(a+b)/2$ and $M=(b-a)/2$ are integers. Applying these coordinates to the two odd entries of a primitive cubic solution preserves integrality in either parity configuration. The sum configuration gives $2K(K^2+3M^2)=C^3$, and the difference configuration gives the same expression after interchanging $K$ and $M$. Section~\ref{sec:fermat} proves that this signed equation has no primitive solution with non-zero coordinates and opposite parities. The descent preserves those conditions and reduces the height $|C|$ by at least a factor of $39$. Section~\ref{sec:parities} completes the passage from the two parity configurations to the full exponent-three theorem.

\subsection{Known approaches to the cubic case}\label{subsec:known-approaches}
The proofs of the cubic case fall broadly into two families. The first family passes to a richer structure, such as an order with unique factorization, the composition of binary quadratic forms, or the arithmetic of an elliptic curve. The second family remains among the rational integers and replaces this structure by explicit divisibility lemmas.

Within the first family, the proof most often found in textbooks takes place in the ring $\ZZ[\omega]$ of Eisenstein integers, where $\omega=(-1+\eta)/2$ and $\eta=\sqrt{-3}$. This ring is the maximal order of $\mathbb Q(\eta)$ and a Euclidean domain, so unique factorization settles the extraction of cubes at once. The argument goes back to Gauss and appears, for instance, in \cite[Chapter~17]{irelandrosen}. This route introduces half-integral coordinates and the six units $\pm1,\pm\omega,\pm\omega^2$, which the descent must then absorb. Monsky gives a self-contained account of descent in $\ZZ[\omega]$ written for a broad audience \cite{monsky}. Barbara's signed descent also belongs to this route. Its extraction lemma relies on unique factorization in $\ZZ[\omega]$, although its prime elements are chosen inside $\ZZ[\sqrt{-3}]$ and the units are then removed by parity \cite{barbara}.

Another route in the first family stays with the form $X^2+3Y^2$ and derives the extraction from the classical theory of binary quadratic forms. This route rests on three ingredients. The reduction theory of Lagrange and Gauss shows that every primitive positive definite form of discriminant $-12$ is properly equivalent to $X^2+3Y^2$, so the class number $h(-12)$ equals one \cite[\S2]{cox}. Gauss's composition of forms, introduced in Section~V of the \emph{Disquisitiones Arithmeticae}, makes the classes of primitive forms of a given discriminant into a finite abelian group and relates the representations of a product to those of its factors \cite[Section~V]{gauss}, \cite[\S3]{cox}, \cite{buell}. The discriminant $-12$ equals $2^2$ times the discriminant $-3$ of the maximal order, so it is not fundamental. Its forms correspond to the proper ideals of the order $\ZZ[\sqrt{-3}]$ of conductor two, and this correspondence is compatible with composition \cite[\S7]{cox}.

With these results, a primitive representation $X^2+3Y^2=W^3$ with $W$ odd first gives $\gcd(W,6)=1$. If $3\mid W$, then primitivity would force the exponent of $3$ in $X^2+3Y^2$ to be exactly one, which is incompatible with a cube. The principal ideal generated by $X+Y\sqrt{-3}$ has norm $W^3$ and is prime to the conductor. Its prime ideal factors lie above split primes, and primitivity excludes the simultaneous occurrence of a prime ideal and its conjugate. Unique factorization for ideals prime to the conductor therefore makes every exponent a multiple of three \cite[\S7]{cox}. The ideal is a cube, the triviality of the class group makes its cube root principal, and the units $\pm1$ absorb the remaining sign. This route is rigorous, although reduction, composition, and the correspondence with ideals of a non-maximal order together demand considerably more theory than the descent itself.

A third route in the first family passes to an elliptic curve. The smooth projective Fermat cubic is isomorphic over $\mathbb Q$ to the elliptic curve with affine equation $y^2=x^3-432$ \cite{silvermantate}. The exponent-three theorem is equivalent to the assertion that its rational points are precisely the point at infinity and $(12,\pm36)$. Descent on this curve gives a proof through its group law.

In the second family, Edwards completes Euler's argument with tools close to those available to Euler \cite[Chapter~2]{edwards}. The completion proceeds through a sequence of divisibility lemmas for the form $X^2+3Y^2$, among them the representation of suitable primes and the division of one representation by another. Weil's historical account situates Euler's argument within the broader study of this form \cite{weil}. These approaches are elementary, although the number of auxiliary lemmas makes them lengthy.

The present paper belongs to the second family and uses the order $\mathcal L=\ZZ[\eta]$ as a language for the division steps. Lemma~\ref{lem:fermat-prime} supplies elements of prime norm, and the explicit division~\eqref{eq:fermat-division} permits successive extraction of their powers. Under the primitive odd-norm hypotheses, Theorem~\ref{thm:odd-norm-lifting} thereby gives the required power extraction within $\mathcal L$. The coordinates remain integral throughout, and the only units are $\pm1$.

\section{Norms and primitive power extraction}\label{sec:arithmetic-lifting}
Let $\eta=\sqrt{-3}$ and let $\mathcal L=\ZZ[\eta]\subset\CC$. Conjugation sends $X+Y\eta$ to $X-Y\eta$. We call an element primitive when its two integer coordinates are coprime. The multiplication law and norm are
\begin{equation}\label{eq:norm-product}
\begin{aligned}
(X+Y\eta)(E+F\eta)&=(XE-3YF)+(XF+YE)\eta\\
\mathcal N(X+Y\eta)&=(X+Y\eta)(X-Y\eta)=X^2+3Y^2.
\end{aligned}
\end{equation}
The norm is a non-negative integer, is positive away from zero, and is multiplicative. Opposite parity of $X,Y$ is equivalent to oddness of this norm. These elementary properties will suffice for the extraction argument.

\subsection{Prime norms from a pigeonhole argument}\label{subsec:fermat-prime-norms}
\begin{lemma}[A prime represented by the norm]\label{lem:fermat-prime}
Let $p>3$ be a prime. Suppose that $s^2\equiv-3\pmod p$ has an integer solution. Then there are integers $E,F$ with
\[
p=E^2+3F^2.
\]
They are coprime and of opposite parities, and $p\nmid EF$.
\end{lemma}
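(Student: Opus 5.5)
We will use Thue's pigeonhole lemma. Fix $s$ with $s^2\equiv-3\pmod p$ and put $m=\lfloor\sqrt p\rfloor$, so $m<\sqrt p<m+1$ since $p$ is not a square. Consider the $(m+1)^2>p$ integers $x-sy$ with $0\le x,y\le m$. Two of them agree modulo $p$. Subtracting gives integers $u,v$, not both zero, with $|u|,|v|\le m<\sqrt p$ and $u\equiv sv\pmod p$. Then
\[
u^2+3v^2\equiv s^2v^2+3v^2\equiv0\pmod p, \qquad 0<u^2+3v^2<4p .
\]
Hence $u^2+3v^2\in\{p,2p,3p\}$. Here $v\neq0$, for otherwise $p\mid u$ with $|u|<\sqrt p$ would force $u=0$.

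The main work is eliminating the multiples $2p$ and $3p$.

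\emph{Case $3p$.} Then $3\mid u$. Writing $u=3w$ gives $3w^2+v^2=p$, so $(E,F)=(v,w)$ works.

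\emph{Case $2p$.} Reduce $u^2+3v^2$ modulo $4$. If $u$ and $v$ have opposite parity, the value is odd, which is impossible. If both are even, then $4\mid 2p$, which is impossible since $p$ is odd. So $u$ and $v$ are both odd, and then $u^2+3v^2\equiv1+3\equiv 0\pmod 4$. This again forces $4\mid2p$, a contradiction. So this case cannot occur.

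It remains to check the side conditions for $p=E^2+3F^2$.
\begin{itemize}
\item \emph{Coprimality.} If $d=\gcd(E,F)$, then $d^2\mid p$, so $d=1$.
\item \emph{Opposite parity.} The norm $p$ is odd, and as noted after \eqref{eq:norm-product}, oddness of the norm is equivalent to opposite parity.
\item \emph{$p\nmid EF$.} If $p\mid E$, then $E^2\le p$ forces $E=0$, so $p=3F^2$, contradicting $p>3$. If $p\mid F$, then $F=0$ similarly, so $p=E^2$ would be a square, which is impossible.
\end{itemize}

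The step I expect to be the only delicate one is the parity exclusion of $2p$. It is handled by the mod-$4$ computation above.
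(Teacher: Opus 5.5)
Your proof is correct and follows the paper's argument essentially step for step. You use Thue's pigeonhole on $x-sy$ with $0\le x,y\le m$, the bound $0<u^2+3v^2<4p$, the exclusion of $2p$ by squares modulo $4$, the substitution $u=3w$ in the $3p$ case, and the same elementary checks of coprimality, opposite parity and $p\nmid EF$. Your explicit parity case split only spells out the paper's one-line remark that $X^2+3Y^2\not\equiv2\pmod 4$.
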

\begin{proof}
We use the pigeonhole argument of Thue \cite{thue}. Put $m=\lfloor\sqrt p\rfloor$. Since $p$ is prime, it is not a square, so $m<\sqrt p$. The $(m+1)^2$ pairs $(x,y)$ with $0\leq x,y\leq m$ outnumber the $p$ residues modulo $p$. Two distinct pairs therefore give the same residue of $x-sy$. Their difference $(x,y)$ is non-zero and satisfies
\[
x\equiv sy\pmod p,
\qquad |x|<\sqrt p,
\qquad |y|<\sqrt p.
\]
If $y=0$, then $p\mid x$ with $|x|<p$, which forces $x=0$. Hence $y\neq0$. The congruence gives $x^2+3y^2\equiv(s^2+3)y^2\equiv0\pmod p$, while
\[
0<x^2+3y^2<4p.
\]
The integer $x^2+3y^2$ is therefore $p$, $2p$ or $3p$. Squares modulo $4$ show that $x^2+3y^2$ never equals $2\pmod4$, excluding $2p$. If $x^2+3y^2=3p$, then $3\mid x$ and $p=y^2+3(x/3)^2$. In the remaining case $p=x^2+3y^2$. Either way, $p=E^2+3F^2$ for integers $E,F$.

Primality gives coprimality. Since $p$ is odd, $E,F$ have opposite parities. The identities $E=0$ and $F=0$ would make $p$ three times a square or a square. The bounds $|E|<p$ and $|F|<p$ now give $p\nmid EF$.
\end{proof}

\subsection{Primitive odd-power extraction}\label{subsec:odd-norm-lifting}
\begin{theorem}[Lifting a primitive odd-power norm]\label{thm:odd-norm-lifting}
Let $k\geq3$ be odd, and let $X,Y\in\ZZ$ be coprime and of opposite parities. Suppose $X^2+3Y^2=W^k$ with $W>0$ an integer. Then there are coprime integers $E,F$ of opposite parities such that
\begin{equation}\label{eq:odd-norm-lifting}
X+Y\eta=(E+F\eta)^k,
\qquad E^2+3F^2=W.
\end{equation}
\end{theorem}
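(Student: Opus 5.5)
We argue by strong induction on $W$. If $W=1$, then $X^2+3Y^2=1$ forces $Y=0$ and $X=\pm1$. Since $k$ is odd, $X+Y\eta=(X+0\eta)^k$, and $(E,F)=(X,0)$ works: these entries are coprime, of opposite parities, and $E^2+3F^2=1$.

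Now let $W>1$. The first step is to show $\gcd(W,6)=1$. The norm $X^2+3Y^2$ is odd, so $W$ is odd. If $3\mid W$, then $3\mid X$, and the exponent of $3$ in $9(X/3)^2+3Y^2$ is exactly one, because $3\nmid Y$ by coprimality. This contradicts $k\ge3$.

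Next take a prime $p\mid W$, so $p>3$. Then $p\nmid Y$, since otherwise $p\mid X$. Hence $s\equiv XY^{-1}$ satisfies $s^2\equiv-3\pmod p$. Lemma~\ref{lem:fermat-prime} then gives $p=E_1^2+3F_1^2$, and we put $\pi=E_1+F_1\eta$.

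The key division step is to show that one of $\pi,\bar\pi$ divides $X+Y\eta$ in $\mathcal L$. Compute
\[
(X+Y\eta)\bar\pi=(XE_1+3YF_1)+(YE_1-XF_1)\eta .
\]
We have $(XE_1+3YF_1)(XE_1-3YF_1)=X^2E_1^2-9Y^2F_1^2$, and modulo $p$ this is $X^2E_1^2-9Y^2F_1^2\equiv -3X^2F_1^2-9Y^2F_1^2=-3F_1^2(X^2+3Y^2)\equiv0$. So $p$ divides $XE_1+3YF_1$ or $XE_1-3YF_1$; replacing $\pi$ by $\bar\pi$ if necessary, assume the former. Then
\[
(XE_1+3YF_1)E_1-(YE_1-XF_1)\cdot3F_1=Xp-3YF_1\cdot 0\ldots
\]
More simply, $E_1(YE_1-XF_1)=Y(E_1^2+3F_1^2)-F_1(XE_1+3YF_1)\equiv0$. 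Since $p\nmid E_1$, $p$ divides both coordinates. Hence $(X+Y\eta)/\pi=(X+Y\eta)\bar\pi/p\in\mathcal L$, with norm $W^k/p$. Its coordinates are coprime, since a common prime factor would divide $X+Y\eta$ after multiplying by $\pi$, and primality issues can be handled through the norm.

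Repeat this division. At each stage the quotient $\alpha$ is primitive, and at most one of $\pi,\bar\pi$ divides it while $p$ still divides its norm. If both did, then $\pi\bar\pi=p$ would divide $\alpha$, contradicting primitivity; this needs the divisibility-by-$p$ criterion above applied to $\alpha$. Thus $X+Y\eta=\pi^{k}\beta$, with $\mathcal N(\beta)=(W/p)^k$ and $\beta$ primitive of odd norm. The exponent is exactly $k$: we extract $k$ factors, and the argument shows $\bar\pi$ never appears, so the same $\pi$ persists. By induction, $\beta=(E_2+F_2\eta)^k$, and we set $E+F\eta=\pi(E_2+F_2\eta)$. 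Coprimality and opposite parity of $E,F$ follow because $(E+F\eta)^k$ is primitive with odd norm $W^k$, so $E^2+3F^2=W$ is odd, and any common factor of $E,F$ would divide $X,Y$.

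The main obstacle is the persistence step. We must show that after dividing by $\pi$ once, the quotient is still divisible by $\pi$ and not by $\bar\pi$ whenever $p$ still divides its norm, and that the quotient remains primitive. The plan is to prove a small sublemma: if $\alpha$ is primitive and $p\mid\mathcal N(\alpha)$, then exactly one of $\pi,\bar\pi$ divides $\alpha$. Then one tracks $\pi^j$ via the congruence $X\equiv sY$ with a fixed root $s$, which identifies $\pi$ uniquely through $E_1\equiv sF_1\pmod p$.
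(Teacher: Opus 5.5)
\textbf{Verdict: there is a genuine gap, in the persistence step.} Your plan follows the paper's route: induction on $W$ after showing $\gcd(W,6)=1$, Thue's lemma for $p=E_1^2+3F_1^2$, explicit division via $(X+Y\eta)\bar\pi/p$, extraction of $k$ copies of a single $\pi$, and induction on $\beta$.

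Your division step is correct and differs slightly from the paper's. You use the congruence $(XE_1+3YF_1)(XE_1-3YF_1)\equiv-3F_1^2(X^2+3Y^2)\equiv0\pmod p$ together with $E_1(YE_1-XF_1)=Yp-F_1(XE_1+3YF_1)$. This replaces the paper's observation that $XY^{-1}$ is one of the two distinct roots $\pm E_1F_1^{-1}$ of $z^2=-3$. It shows that at least one of $\pi,\bar\pi$ divides any element whose norm is divisible by $p$, primitive or not. The garbled line ending in $Xp-3YF_1\cdot0\ldots$ should simply be deleted.

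The gap is the persistence step, which you flag but do not close. For a quotient $\beta_m$ with $X+Y\eta=\pi^m\beta_m$, you show that $\beta_m$ is primitive with $p\mid\mathcal N(\beta_m)$, so exactly one of $\pi,\bar\pi$ divides it. That statement is symmetric in $\pi$ and $\bar\pi$, so ``the argument shows $\bar\pi$ never appears'' does not follow from it. Your proposed repair, tracking $X\equiv sY$ with a fixed root $s$, also fails as stated. That congruence says only that the ring homomorphism $A+B\eta\mapsto A-sB \bmod p$ kills $X+Y\eta$. Since this homomorphism also kills $\pi$ (because $E_1\equiv sF_1$), it gives no information about $\beta_m$.

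The missing idea is to use primitivity of the original element, not of the current quotient. Suppose $\beta_m=\bar\pi\delta$ for some $1\le m<k$. Then $X+Y\eta=\pi^{m-1}(\pi\bar\pi)\delta=p\,\pi^{m-1}\delta$, so $p$ divides both $X$ and $Y$, contradicting $\gcd(X,Y)=1$. Your `at least one' division step still applies at each stage, because $p^k\mid W^k$ keeps $p\mid\mathcal N(\beta_m)$ for $m<k$. Combined with the contradiction above, it forces $\pi\mid\beta_m$ at every stage. This is exactly the paper's one-line argument, and it makes your `at most one' claim unnecessary.

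If you want to keep the congruence viewpoint, track the conjugate quantity instead. The map $\psi(A+B\eta)=A+sB \bmod p$ is a ring homomorphism to $\ZZ/p\ZZ$ (since $s^2\equiv-3$) with $\psi(\bar\pi)=0$. Also $\psi(X+Y\eta)\equiv2sY\not\equiv0$. From $\psi(X+Y\eta)=\psi(\pi)^m\psi(\beta_m)$ you get $\psi(\beta_m)\neq0$, hence $\bar\pi\nmid\beta_m$.
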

\begin{proof}
The norm is odd, so $W$ is odd. Also $3\nmid W$. Indeed, if $3\mid W$, then $3\mid X$ and $3\nmid Y$ by coprimality. The exponent of $3$ in $X^2+3Y^2$ would then be exactly one, which is impossible for a $k$th power with $k\geq3$.

Thus $W$ is odd and $3\nmid W$, so every prime divisor of $W$ is greater than three. We prove~\eqref{eq:odd-norm-lifting} by induction on $W$ within this range. For $W=1$, one has $Y=0$ and $X=\pm1$, giving the assertion because $k$ is odd. For $W>1$, choose a prime $p\mid W$. Coprimality gives $p\nmid Y$, and $(XY^{-1})^2\equiv-3\pmod p$. Lemma~\ref{lem:fermat-prime} provides $p=e^2+3f^2$. Write $\pi=e+f\eta$ and $\overline\pi=e-f\eta$.

We first check division explicitly. Let $A+B\eta$ be primitive with norm divisible by $p$. Then $p\nmid B$, and $AB^{-1}$ is a root of $z^2=-3$ in the field $\ZZ/p\ZZ$. The two roots are $\pm ef^{-1}$, and they are distinct, since $p$ is odd and $p\nmid ef$. Hence exactly one of the congruences $Af\equiv Be$ and $Af\equiv-Be$ holds modulo $p$. When $Af\equiv Be\pmod p$, one has
\begin{equation}\label{eq:fermat-division}
\frac{(A+B\eta)\overline\pi}{p}
=\frac{Ae+3Bf}{p}+\frac{Be-Af}{p}\eta
\in\mathcal L.
\end{equation}
Both numerator divisibilities follow from $Af\equiv Be$ and $e^2+3f^2=p$. Conversely, the coefficient of $\eta$ in~\eqref{eq:fermat-division} shows that $\pi$ divides $A+B\eta$ only when $Af\equiv Be\pmod p$. The same computation with $f$ replaced by $-f$ applies to $\overline\pi$. Thus exactly one of the two conjugate factors divides $A+B\eta$ within $\mathcal L$.

Apply this division to $\alpha=X+Y\eta$. After a possible change of sign of $f$, the factor $\pi$ divides $\alpha$, and we write $\alpha=\pi\beta_1$. From this point on, $\pi$ remains fixed for the remaining $k-1$ divisions. Each quotient remains primitive, because a common integer divisor of its two coordinates would also divide both coordinates of $\alpha$. Its norm remains odd. We perform exactly $k$ divisions. The exponent of $p$ in the initial norm $W^k$ is at least $k$, so before each of these divisions the quotient norm is divisible by $p$. By the preceding paragraph, exactly one of $\pi$ and $\overline\pi$ divides each such quotient. Suppose that after $m$ divisions, with $1\leq m<k$, we have $\alpha=\pi^m\beta_m$. If the next division were by $\overline\pi$, so that $\beta_m=\overline\pi\delta$ with $\delta\in\mathcal L$, then
\[
\alpha=\pi^m\overline\pi\delta
=p\bigl(\pi^{m-1}\delta\bigr).
\]
Since $\pi^{m-1}\delta\in\mathcal L$, both integer coordinates $X,Y$ of $\alpha$ would be divisible by $p$, contradicting $\gcd(X,Y)=1$. Thus all $k$ divisions use $\pi$. We stop after the $k$th division, even when $p$ still divides the remaining norm, and obtain
\[
\alpha=\pi^k\beta,
\qquad \mathcal N(\beta)=(W/p)^k.
\]
The coordinates of $\beta$ are primitive and of opposite parities. Induction writes $\beta=\gamma^k$ with $\gamma\in\mathcal L$. Multiplication in $\mathcal L$ is commutative, so $\alpha=(\pi\gamma)^k$.

Write $\pi\gamma=E+F\eta$. Multiplicativity of the norm yields $W=E^2+3F^2$. A common divisor of $E,F$ would have its $k$th power divide $X,Y$, while oddness of $W$ gives opposite parities. This completes the induction and the proof.
\end{proof}

\subsection{Primitive cube extraction}\label{subsec:fermat-cube}
\begin{lemma}[Primitive cube norms]\label{lem:fermat-cube}
Let $X,Y\in\ZZ$ be coprime and of opposite parities. Suppose that
\begin{equation}\label{eq:fermat-cube-norm}
X^2+3Y^2=W^3,
\qquad W\in\ZZ_{>0}.
\end{equation}
Then there are coprime integers $E,F$ of opposite parities such that
\begin{equation}\label{eq:fermat-cube-extract}
X+Y\eta=(E+F\eta)^3,
\qquad W=E^2+3F^2.
\end{equation}
In particular,
\begin{equation}\label{eq:fermat-cube-coordinates}
X=E(E^2-9F^2),
\qquad Y=3F(E^2-F^2),
\qquad 3\mid Y.
\end{equation}
\end{lemma}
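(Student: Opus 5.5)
This lemma is the case $k=3$ of Theorem~\ref{thm:odd-norm-lifting}, followed by an explicit expansion of the cube. I would first check that the hypotheses match. Here $k=3$ is odd and at least three, $X,Y$ are coprime of opposite parities, and $W>0$ satisfies $X^2+3Y^2=W^3$. The theorem then supplies coprime integers $E,F$ of opposite parities with $X+Y\eta=(E+F\eta)^3$ and $E^2+3F^2=W$, which is exactly~\eqref{eq:fermat-cube-extract}. No further arithmetic input is needed for this part.

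For the coordinate identities~\eqref{eq:fermat-cube-coordinates}, I would expand $(E+F\eta)^3$ with the binomial theorem and $\eta^2=-3$, $\eta^3=-3\eta$:
\[
(E+F\eta)^3=E^3+3E^2F\eta+3EF^2\eta^2+F^3\eta^3=(E^3-9EF^2)+(3E^2F-3F^3)\eta.
\]
Comparing coordinates, which is legitimate because $1,\eta$ are linearly independent over $\ZZ$ (as $\eta\notin\RR$), gives $X=E(E^2-9F^2)$ and $Y=3F(E^2-F^2)$. The divisibility $3\mid Y$ can be read off directly from the second formula.

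No step is a real obstacle, since the substantive extraction is already contained in Theorem~\ref{thm:odd-norm-lifting}. The only point needing care is the sign convention in the $\eta$-coordinate, namely $\eta^3=-3\eta$, so that $Y=3F(E^2-F^2)$ and not $3F(E^2+F^2)$.
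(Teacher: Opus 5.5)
Your proposal is correct and matches the paper's proof, which likewise applies Theorem~\ref{thm:odd-norm-lifting} with $k=3$ and expands $(E+F\eta)^3$ using $\eta^2=-3$. Your explicit expansion and the remark on comparing coordinates via the linear independence of $1,\eta$ add a little detail that the paper leaves implicit.
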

\begin{proof}
Apply Theorem~\ref{thm:odd-norm-lifting} with $k=3$ and expand $(E+F\eta)^3$ using $\eta^2=-3$.
\end{proof}

For instance, $10^2+3\cdot9^2=7^3$ and $10+9\eta=(-2+\eta)^3$. Primitivity remains necessary even when the norm is odd. Indeed,
\begin{equation}\label{eq:fermat-nonprimitive-example}
\mathcal N(14+7\eta)=14^2+3\cdot7^2=7^3.
\end{equation}
A cube root in $\mathcal L$ would have norm $7$, whose representatives are $\pm2\pm\eta$. Their cubes all have real coordinate $\pm10$, so none equals $14+7\eta$.

\begin{remark}[Units and the restricted extraction]\label{rem:units}
The units of $\mathcal L$ are exactly $\{1,-1\}$, since a unit has norm one and $E^2+3F^2=1$ forces $F=0$ and $E=\pm1$. Both units are $k$th powers for odd $k$.

The extraction theorem uses the primitive odd-norm hypotheses. Unique factorization fails in $\mathcal L$. Indeed, $4=2\cdot2=(1+\eta)(1-\eta)$ gives distinct irreducible factorizations, since norm $2$ is impossible and the units are $\pm1$. The prime-by-prime division in the proof establishes exactly the restricted extraction that is needed. The pigeonhole argument of Lemma~\ref{lem:fermat-prime} supplies the required prime representation by $X^2+3Y^2$.
\end{remark}

\begin{remark}[Uniqueness of the extracted root]\label{rem:uniqueness}
The element $E+F\eta$ in Theorem~\ref{thm:odd-norm-lifting} is unique. Indeed, put $\gamma=E+F\eta$. Every other $k$th root of $\gamma^k$ in $\CC$ has the form $\zeta\gamma$ with $\zeta^k=1$ and $\zeta\neq1$. Suppose that $\zeta\gamma\in\mathcal L$. Then $\zeta$ lies in $\mathbb Q(\eta)$, whose roots of unity are the sixth roots of unity. As $k$ is odd, the order of $\zeta$ divides $\gcd(6,k)=\gcd(3,k)$. If $3\nmid k$, then $\zeta=1$, contrary to the choice of $\zeta$. If $3\mid k$, then $\zeta$ is one of $\omega$ and $\overline\omega$, where $\omega=(-1+\eta)/2$. The identities
\[
\omega\gamma=\frac{-E-3F}{2}+\frac{E-F}{2}\eta,
\qquad
\overline\omega\gamma=\frac{-E+3F}{2}-\frac{E+F}{2}\eta
\]
show that $\zeta\gamma\in\mathcal L$ would require $E\equiv F\pmod2$, contrary to the opposite parities. For the example above, the two other cube roots of $10+9\eta$ are $(-1-3\eta)/2$ and $(5+\eta)/2$.

The uniqueness persists among quaternions with integer coordinates. Identify $\eta$ with $\mathbf i+\mathbf j+\mathbf k$ in the Lipschitz order, so that $\mathcal N$ becomes the quaternion norm. A quaternion $q$ with $q^k=\alpha$ commutes with $\alpha$. If $Y\neq0$, then $q$ lies in $\RR+\RR\eta$, whose intersection with the Lipschitz order is $\mathcal L$. If $Y=0$, then primitivity gives $\alpha=\pm1$, and multiplicativity of the quaternion norm gives $\mathcal N(q)=1$. The integer quaternions of norm one are $\pm1,\pm\mathbf i,\pm\mathbf j,\pm\mathbf k$. Since $k$ is odd, the non-real members have non-real $k$th powers, leaving only $q=\alpha$. The Hurwitz order, which also contains the quaternions with four half-integral coordinates, meets $\RR+\RR\eta$ in $\ZZ[\omega]$ and contains the cube root $\tfrac12(1+\mathbf i+\mathbf j+\mathbf k)$ of $-1$. The passage from $\mathcal L$ to $\ZZ[\omega]$ thus mirrors the passage from the Lipschitz order to the Hurwitz order.
\end{remark}

\section{A signed descent for the cubic equation}\label{sec:fermat}
The half-sum and half-difference of two odd integers produce integral coordinates in the norm $K^2+3M^2$. Primitive cube extraction then permits a descent whose signed formulation is preserved when the coordinates change order.

\subsection{The half-sum, the half-difference, and the norm}\label{subsec:fermat-reduction}
Suppose that $a,b,c$ are pairwise coprime positive integers satisfying
\begin{equation}\label{eq:fermat-start}
a^3+b^3=c^3,
\qquad a<b<c,
\qquad a,b\text{ odd},
\qquad c\text{ even}.
\end{equation}
Set
\begin{equation}\label{eq:fermat-km}
K=\frac{a+b}{2},
\qquad M=\frac{b-a}{2},
\qquad a=K-M,
\qquad b=K+M.
\end{equation}
Then $K>M>0$, $\gcd(K,M)=1$, and $K,M$ have opposite parities. Indeed, every common divisor of $K,M$ divides $a,b$, and $K+M=b$ is odd. Expansion gives the equivalent equation
\begin{equation}\label{eq:fermat-norm}
2K(K^2+3M^2)=c^3.
\end{equation}
Conversely, positive coprime integers $K>M$ of opposite parities satisfying~\eqref{eq:fermat-norm} yield odd coprime $a,b$ through~\eqref{eq:fermat-km}. The equation makes $c$ even and coprime to both $a$ and $b$, and $c>b$ follows from $c^3-b^3=a^3>0$.

The equation can be written as $2K\mathcal N(K+M\eta)=c^3$. Its factors have only the prime $3$ as a possible common divisor. The divisibility analysis below produces a primitive norm that is a cube, to which Lemma~\ref{lem:fermat-cube} applies.

\subsection{Divisibility and a decreasing integer height}\label{subsec:fermat-descent}
To keep the descent closed under changes of sign, consider the enlarged system
\begin{equation}\label{eq:fermat-signed}
2K(K^2+3M^2)=C^3,
\qquad KMC\neq0,
\qquad \gcd(K,M)=1,
\qquad K+M\text{ odd}.
\end{equation}
Here $K,M,C$ are signed integers. The positive sum configuration is included directly. The difference configuration will enter through the interchange of $K$ and $M$ in Subsection~\ref{subsec:fermat-parities}. Allowing signs removes the need to preserve the ordering $K>M>0$ during descent.

\begin{proposition}[Necessary divisibility]\label{prop:fermat-divisibility}
Every solution of~\eqref{eq:fermat-signed} would satisfy
\begin{equation}\label{eq:fermat-divisibility}
4\mid K,
\qquad M\text{ odd},
\qquad 9\mid K,
\qquad 3\nmid M.
\end{equation}
Moreover, there would be non-zero integers $u,v$, with $v>0$, such that
\begin{equation}\label{eq:fermat-split}
\frac{2K}{9}=u^3,
\qquad M^2+3(K/3)^2=v^3,
\qquad C=3uv.
\end{equation}
\end{proposition}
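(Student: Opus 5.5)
The plan is to treat $2K$ and the odd norm $K^2+3M^2$ as almost coprime factors of a cube. I would first settle the powers of $2$, then split according to whether $3\mid K$. The case $3\nmid K$ should be killed by a congruence modulo $9$, and the case $3\mid K$ should yield the factorization~\eqref{eq:fermat-split}.

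\emph{Parity.} Since $K+M$ is odd, $N=K^2+3M^2$ is odd. The left side of~\eqref{eq:fermat-signed} is even, so $C$ is even and $8\mid C^3=2KN$. As $N$ is odd, this forces $4\mid K$, and then $M$ is odd because $K+M$ is odd.

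\emph{The gcd of the two factors.} A common prime divisor $q$ of $2K$ and $N$ is odd, because $N$ is odd. It then divides $K$, hence $3M^2$. Since $\gcd(K,M)=1$, this forces $q=3$. Thus $\gcd(2K,N)$ is a power of $3$.

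\emph{Case $3\nmid K$.} Here $2K$ and $N$ are coprime. Their product is a cube and $N>0$, so each is a cube up to sign: $2K=u^3$ and $N=v^3$ with $v>0$. Now $(K,M)$ is primitive of opposite parities, so Lemma~\ref{lem:fermat-cube} gives $3\mid M$. Then $N\equiv K^2\pmod 9$ with $3\nmid K$. The cubes modulo $9$ are $0,\pm1$, so $v^3\equiv\pm1$, and the squares prime to $3$ modulo $9$ are $1,4,7$. Hence $K^2\equiv1$, that is $K\equiv\pm1\pmod 9$. But then $u^3=2K\equiv\pm2\pmod9$, which is not a cube residue. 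So this case is impossible, and $3\mid K$.

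\emph{Case $3\mid K$.} Coprimality gives $3\nmid M$. Write $K=3K'$, so that $N=3(M^2+3K'^2)$ and $N'=M^2+3K'^2$ is prime to $3$. The equation becomes $18K'N'=C^3$. Thus $3\mid C$, so $27\mid C^3$, and since $3\nmid N'$ this forces $3\mid K'$, that is $9\mid K$.

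Put $C=3C'$ and $K'=3K''$. The equation reduces to $2K''N'=C'^3$, where $2K''=2K/9$. Any common prime of $2K''$ and $N'$ is odd (as $N'$ is odd), different from $3$, and divides both $K$ and $M$. Hence they are coprime. Since $N'>0$, we get $2K/9=u^3$ and $N'=v^3$ with $v>0$, and $C'=uv$, so $C=3uv$. Both $u$ and $v$ are non-zero because $K\neq0$ and $N'>0$.

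\emph{Main obstacle.} The step I expect to need care is excluding $3\nmid K$. Its key points are invoking Lemma~\ref{lem:fermat-cube} to get $3\mid M$, and the residue bookkeeping modulo $9$. I would double-check this using the explicit coordinates $K=E(E^2-9F^2)$ in case the direct congruence argument has a gap.
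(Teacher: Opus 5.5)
Your proposal is correct and follows the paper's proof essentially step by step: parity, $\gcd(2K,K^2+3M^2)$ a power of $3$, Lemma~\ref{lem:fermat-cube} and a mod-$9$ contradiction when $3\nmid K$, then $9\mid K$ and coprime cube factors after dividing by $27$. The only difference is in excluding $3\nmid K$, where the paper reads off $C^3\equiv 2K^3\equiv\pm2\pmod 9$ directly from $3\mid M$. That shortcut makes your separate cube factorization of $2K$, the squares-mod-$9$ bookkeeping, and the contemplated check via explicit coordinates unnecessary, though your version is also valid.
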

\begin{proof}
Write $D=K^2+3M^2$, which is odd. The cube $2KD$ is even, hence divisible by $8$, so $4\mid K$ and $M$ is odd. Also
\[
\gcd(2K,D)=\gcd(K,3).
\]
If $3\nmid K$, then the factors $2K,D$ are coprime and $D$ is a positive cube. Lemma~\ref{lem:fermat-cube} gives $3\mid M$. Consequently $C^3\equiv2K^3\equiv\pm2\pmod9$, contradicting the cube residues $0,1,-1$. Thus $3\mid K$ and $3\nmid M$. The exponent of $3$ in $D$ is exactly one. The cube equation now forces the exponent of $3$ in $K$ to be congruent to two modulo three, so $9\mid K$.

Dividing the equation by $27$ gives
\[
\frac{2K}{9}\,\frac{D}{3}=(C/3)^3.
\]
The two factors are coprime. The second is odd and prime to $3$, while every common prime other than $3$ would divide $K$ and $M$. Hence each factor is an integer cube. Since $D/3=M^2+3(K/3)^2>0$, the cube roots give~\eqref{eq:fermat-split}.
\end{proof}

\begin{theorem}[Descent for the cubic equation]\label{thm:fermat-descent}
System~\eqref{eq:fermat-signed} has no solution. In particular, the parity configuration in~\eqref{eq:fermat-start} is impossible.
\end{theorem}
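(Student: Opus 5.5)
The plan is infinite descent on the height $|C|$. From a hypothetical solution $(K,M,C)$ of~\eqref{eq:fermat-signed} I will build another solution $(K',M',C')$ with $0<|C'|<|C|$, which contradicts well-ordering. The second assertion then follows, because~\eqref{eq:fermat-start} yields a solution of~\eqref{eq:fermat-signed} through~\eqref{eq:fermat-km} and~\eqref{eq:fermat-norm}.

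\emph{Step 1: cube extraction.} Start from Proposition~\ref{prop:fermat-divisibility}, which gives~\eqref{eq:fermat-divisibility} and~\eqref{eq:fermat-split}. The pair $(M,K/3)$ is coprime, since a common prime would divide $K$ and $M$. It has opposite parities, since $M$ is odd and $4\mid K$ makes $K/3$ even. Its norm is $M^2+3(K/3)^2=v^3$. Lemma~\ref{lem:fermat-cube} therefore gives coprime $E,F$ of opposite parities with
\[
M=E(E^2-9F^2),\qquad K/3=3F(E^2-F^2),\qquad v=E^2+3F^2 .
\]
Substituting into~\eqref{eq:fermat-split} gives $u^3=2K/9=2F(E-F)(E+F)$.

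\emph{Step 2: splitting into three cubes.} The numbers $E-F$ and $E+F$ are odd. Any common divisor of them divides $2E$ and $2F$, so they are coprime. Also $\gcd(F,E\pm F)=\gcd(F,E)=1$, and $E\pm F$ are odd, so $2F$ is coprime to both. The three factors are therefore pairwise coprime with product a cube. Since negative signs are absorbed by odd powers, there are integers $r,s,t$ with
\[
2F=r^3,\qquad E-F=s^3,\qquad E+F=t^3,
\]
so that $t^3-s^3=2F=r^3$.

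\emph{Step 3: the new solution.} Here $s,t$ are odd, so $K'=(t-s)/2$ and $M'=(t+s)/2$ are integers. The identity $(M'+K')^3-(M'-K')^3=2K'(K'^2+3M'^2)$ gives $2K'(K'^2+3M'^2)=r^3$, and I set $C'=r$. I then check the side conditions of~\eqref{eq:fermat-signed}:
\begin{itemize}
\item $K\neq0$ forces $F\neq0$ and $E^2\neq F^2$, so $r\neq0$ and $t\neq s$, giving $K'\neq0$.
\item $M\neq0$ forces $E\neq0$, so $t\neq -s$, giving $M'\neq0$.
\item $\gcd(K',M')$ divides $\gcd(s,t)=1$.
\item $K'+M'=t$ is odd.
\end{itemize}

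\emph{Step 4: the height decreases.} From $C=3uv$ we get
\[
|C|^3=27\,|u|^3v^3=27\,|C'|^3\,|E^2-F^2|\,v^3 .
\]
Here $|E^2-F^2|\ge1$, and $v=E^2+3F^2\ge4$ because $E,F\neq0$. This gives $|C|^3\ge 27\cdot64\,|C'|^3$, hence $|C'|\le|C|/12<|C|$. Finer bounds could improve the constant towards the paper's $39$, but strict decrease is all the descent needs.

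I expect the main obstacle to be the bookkeeping in Steps 2 and 3. The pairwise coprimality must be justified carefully, and every non-vanishing and parity condition must be verified so that the new triple really lies in~\eqref{eq:fermat-signed}. Once that is secured, the descent closes immediately.
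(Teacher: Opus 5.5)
Your proposal is correct and follows the paper's proof essentially step for step: Lemma~\ref{lem:fermat-cube} applied to $(M,K/3)$, the splitting of $2F(E+F)(E-F)=u^3$ into three pairwise coprime cubes, and the half-sum/half-difference construction of the new solution, with only the names $r,s,t$ permuted. The one difference is the height estimate: you use $v\ge4$ to get $|C'|\le|C|/12$, whereas the paper uses that $F$ is even and non-zero to get $v\ge13$ and hence $|C'|\le|C|/39$, and either bound suffices for the descent.
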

\begin{proof}
Suppose there is a solution with $|C|$ minimal. Apply Proposition~\ref{prop:fermat-divisibility}. The integers $M,K/3$ are coprime and of opposite parities, so Lemma~\ref{lem:fermat-cube} gives
\begin{equation}\label{eq:fermat-ef}
M=E(E^2-9F^2),
\qquad K/3=3F(E^2-F^2),
\qquad v=E^2+3F^2.
\end{equation}
Here $\gcd(E,F)=1$. Since $M$ is odd, $E$ is odd and $F$ is even. The non-zero values of $K,M$ ensure $EF(E-F)(E+F)\neq0$. Equations~\eqref{eq:fermat-split} and~\eqref{eq:fermat-ef} give
\begin{equation}\label{eq:fermat-three-factors}
(2F)(E+F)(E-F)=u^3.
\end{equation}
These factors are pairwise coprime. An odd common prime would divide both $E$ and $F$, and the two latter factors are odd. Therefore there are non-zero integers $r,s,t$ with
\begin{equation}\label{eq:fermat-rst}
E+F=r^3,
\qquad E-F=s^3,
\qquad 2F=t^3,
\qquad u=rst.
\end{equation}
The integers $r,s$ are odd and coprime, and $r^3-s^3=t^3$. Define
\begin{equation}\label{eq:fermat-step}
K'=\frac{r-s}{2},
\qquad M'=\frac{r+s}{2},
\qquad C'=t.
\end{equation}
Neither $K'$ nor $M'$ vanishes, since these alternatives would give $F=0$ or $E=0$. They are coprime, and their sum is the odd integer $r$. Expansion now yields
\[
2K'\bigl((K')^2+3(M')^2\bigr)=t^3=(C')^3.
\]
Thus~\eqref{eq:fermat-step} is another solution of the same signed system. Since $E$ is odd, $|E|\geq1$. The integer $F$ is even and non-zero, because $F=0$ would imply $K=0$ in~\eqref{eq:fermat-ef}. Hence $|F|\geq2$ and $v=E^2+3F^2\geq1^2+3\cdot2^2=13$. Its height therefore satisfies
\begin{equation}\label{eq:fermat-height}
0<|C'|=|t|=\frac{|C|}{3|rsv|}\leq\frac{|C|}{39}<|C|.
\end{equation}
This contradicts minimality and proves the theorem.
\end{proof}

The closure of the argument can therefore be read in three steps. Primitive norm extraction supplies~\eqref{eq:fermat-ef}, coprime integer factors supply~\eqref{eq:fermat-rst}, and~\eqref{eq:fermat-height} supplies the strictly decreasing positive integer. The positive integer height governs the descent through all changes of sign and order.

\section{The two parity configurations}\label{sec:parities}
\subsection{The difference-of-cubes configuration}\label{subsec:fermat-parities}
The half-sum and half-difference remain integral in both parity configurations by applying them to the two odd entries. When the even entry is a summand, moving the odd summand to the other side gives a difference of cubes.

\begin{proposition}[The difference-of-cubes configuration]\label{prop:fermat-difference}
There are no pairwise coprime positive integers $a,b,c$ with $a,b$ odd and $c$ even satisfying $b^3-a^3=c^3$.
\end{proposition}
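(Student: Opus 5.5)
Assume $a,b,c$ are pairwise coprime positive integers with $a,b$ odd, $c$ even and $b^3-a^3=c^3$. We reduce this to the signed system~\eqref{eq:fermat-signed} and invoke Theorem~\ref{thm:fermat-descent}. Since $a,b$ are odd, the half-sum and half-difference
\[
K=\frac{a+b}{2},\qquad M=\frac{b-a}{2}
\]
are integers. We have $a=K-M$ and $b=K+M$. Also $b>a$, because $b^3-a^3=c^3>0$, so $M>0$, and clearly $K>0$.

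Expanding the difference of cubes gives
\[
(K+M)^3-(K-M)^3=6K^2M+2M^3=2M(M^2+3K^2).
\]
Hence $2M(M^2+3K^2)=c^3$. This is the equation of~\eqref{eq:fermat-signed} with $K$ and $M$ interchanged, which is the interchange announced in the introduction. So the triple $(K',M',C')=(M,K,c)$ is a candidate solution of~\eqref{eq:fermat-signed}.

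It remains to check the side conditions.
\begin{itemize}
\item \emph{Non-vanishing.} $K'=M>0$, $M'=K>0$ and $C'=c>0$, so $K'M'C'\neq0$.
\item \emph{Coprimality.} Any common divisor of $K$ and $M$ divides $K-M=a$ and $K+M=b$. Since $\gcd(a,b)=1$, we get $\gcd(K,M)=1$.
\item \emph{Parity.} $K'+M'=K+M=b$ is odd.
\end{itemize}
Thus $(M,K,c)$ is a genuine solution of~\eqref{eq:fermat-signed}. This contradicts Theorem~\ref{thm:fermat-descent}, which asserts that the system has no solution.

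There is no real obstacle here. The only point needing care is that the expansion produces the factor $2M$ rather than $2K$. The signed system is stated for arbitrary signed integers with no ordering between $K$ and $M$, so swapping the roles is harmless. Positivity of $M$ follows from $b>a$, but only $M\neq0$ is actually needed.
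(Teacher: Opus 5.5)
Your proof is correct and follows the paper's own argument essentially step for step: the same coordinates $K=(a+b)/2$, $M=(b-a)/2$, the same expansion $(K+M)^3-(K-M)^3=2M(M^2+3K^2)=c^3$, and the same observation that the swapped triple $(M,K,c)$ satisfies the signed system, which Theorem~\ref{thm:fermat-descent} excludes. Your explicit check of the side conditions (non-vanishing, coprimality via $a$ and $b$, and $K+M=b$ odd) is what the paper verifies when it introduces $K$ and $M$.
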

\begin{proof}
Suppose such integers exist. Necessarily $b>a$, and
\[
K=\frac{a+b}{2},
\qquad M=\frac{b-a}{2}
\]
are coprime positive integers of opposite parities with $K>M$. Expanding the difference gives
\begin{equation}\label{eq:fermat-difference}
(K+M)^3-(K-M)^3=2M(M^2+3K^2)=c^3.
\end{equation}
The triple $(M,K,c)$ therefore satisfies~\eqref{eq:fermat-signed}. Theorem~\ref{thm:fermat-descent} excludes it. The descent imposes no ordering on its two integer coordinates, so the inequality $M<K$ creates no additional case.
\end{proof}

Under the interchange of coordinates, Proposition~\ref{prop:fermat-divisibility} would require
\[
4\mid M,
\qquad 9\mid M,
\qquad K\text{ odd},
\qquad 3\nmid K.
\]
The corresponding norm is $\mathcal N(M+K\eta)=M^2+3K^2$. More explicitly, the preliminary cube extraction would give
\[
\frac{2M}{9}=u^3,
\qquad K^2+3(M/3)^2=v^3,
\qquad c=3uv.
\]
The same primitive cube-norm lemma and the same height decrease~\eqref{eq:fermat-height} then apply. Thus the sum and difference configurations enter a single signed descent while preserving integral half-coordinates.

\subsection{Completion of the exponent-three proof}\label{subsec:fermat-completion}
The two parity configurations have the respective reductions
\begin{equation}\label{eq:fermat-two-cases}
\begin{aligned}
a^3+b^3=c^3&\quad\Longrightarrow\quad 2K(K^2+3M^2)=c^3\\
b^3-a^3=c^3&\quad\Longrightarrow\quad 2M(M^2+3K^2)=c^3.
\end{aligned}
\end{equation}
In both lines, $a,b$ are the odd entries, $c$ is the even entry, and $K=(a+b)/2$, $M=(b-a)/2$ are integers. After primitive normalization and ordering the odd entries, these coordinates are positive, coprime, and of opposite parities. The first line enters the signed descent as $(K,M,c)$ and the second as $(M,K,c)$. The following corollary makes the exhaustion of the cases explicit.

\begin{corollary}[The full exponent-three case]\label{cor:fermat-three}
There are no positive integers $A,B,C$ satisfying $A^3+B^3=C^3$. Equivalently, there are no non-zero integers $x,y,z$ satisfying $x^3+y^3+z^3=0$.
\end{corollary}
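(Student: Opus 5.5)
The plan is to reduce an arbitrary solution to one of the two parity configurations already excluded by Theorem~\ref{thm:fermat-descent} and Proposition~\ref{prop:fermat-difference}. I would first establish the equivalence of the two formulations. A positive solution $A^3+B^3=C^3$ gives the non-zero triple $(x,y,z)=(A,B,-C)$. Conversely, suppose $x^3+y^3+z^3=0$ with $xyz\neq0$. The three terms cannot all share one sign, so after multiplying by $-1$ two of them are positive and one is negative. Moving the negative cube to the other side yields a positive solution. It therefore suffices to rule out positive $A,B,C$ with $A^3+B^3=C^3$.

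Next comes primitive normalization. Let $g=\gcd(A,B,C)$ and divide by $g$. If a prime $p$ divides two of $A,B,C$, the equation shows that $p^3$ divides the cube of the third, so $p$ divides it as well, contradicting $g=1$ after normalization. Hence $A,B,C$ are pairwise coprime. The parity step then follows: not all three are odd, since odd$+$odd is even, and at most one is even by pairwise coprimality. Consequently exactly one entry is even.

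I would then split into cases according to which entry is even. If $C$ is even, order the summands so that $a<b$; they cannot be equal, since $2a^3=c^3$ would contradict coprimality, or simply because $2$ is not a cube ratio. With $c=C$, we are in configuration~\eqref{eq:fermat-start}, which Theorem~\ref{thm:fermat-descent} excludes. If instead a summand is even, say $B$ after relabeling, then $C^3-A^3=B^3$ with $C,A$ odd and $B$ even. This is $b^3-a^3=c^3$ with $b=C$, $a=A$ and $c=B$, which are pairwise coprime and positive, and Proposition~\ref{prop:fermat-difference} excludes it.

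No step here is a real obstacle, since the substance lies in the descent. The only point requiring care is checking that the hypotheses of the two earlier results, namely pairwise coprimality, positivity, the parity assignment, and $a<b$ in the sum case, are exactly met after normalization and relabeling.
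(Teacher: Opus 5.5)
Your proposal is correct and follows the paper's proof essentially step for step. Both arguments normalize to a pairwise coprime triple and use parity to get exactly one even entry. Both then split on whether $C$ or a summand is even, excluding these cases by Theorem~\ref{thm:fermat-descent} and Proposition~\ref{prop:fermat-difference} respectively, and use the same sign argument for the equivalent formulation. The only cosmetic difference is how you rule out $a=b$: you use coprimality, since $a=b=1$ would force $c^3=2$, whereas the paper compares exponents of $2$ modulo three. Both arguments are equally valid.
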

\begin{proof}
Divide a putative positive solution by $\gcd(A,B,C)$. The resulting triple is pairwise coprime, because any prime dividing two entries must also divide the third. Reduction modulo $2$ shows that exactly one entry is even. All three even entries would contradict primitiveness, while one or three odd entries cannot satisfy the equation.

If $C$ is even, then $A,B$ are odd. Equality $A=B$ is impossible, since $2A^3=C^3$ would give incompatible exponents of $2$ modulo three. Interchanging the summands as needed gives $A<B<C$. This is the sum configuration~\eqref{eq:fermat-start}, excluded by Theorem~\ref{thm:fermat-descent}.

If $C$ is odd, then exactly one summand is even. Name the odd summand $a$, the even summand $c$, and put $b=C$. The equation becomes $b^3-a^3=c^3$, with $a,b$ odd and $c$ even. Proposition~\ref{prop:fermat-difference} excludes this configuration as well. These two cases exhaust the positive primitive equation.

Finally, any non-zero signed solution of $x^3+y^3+z^3=0$ has entries of both signs. Moving the term with the lone sign to the other side and taking absolute values produces a positive solution of $A^3+B^3=C^3$. This proves the signed assertion.
\end{proof}

\bibliographystyle{smfalpha}
\bibliography{fermat_cubic}
\end{document}